\newcounter{@ToDo}
\newcommand{\todo@helper}[1]{%
	({\color{blue}TODO~\arabic{@ToDo}: {#1\@addpunct{.}}})%
}
\newcommand{\todo}[1]{\stepcounter{@ToDo}%
	\relax\ifmmode\text{\todo@helper{#1}}%
	\else\todo@helper{#1}\fi%
}
\newcommand{\ptSize}{2pt}
\newtheorem{thm}{Theorem}[section]
\newtheorem{prop}[thm]{Proposition}
\newtheorem{lem}[thm]{Lemma}
\newtheorem{cor}[thm]{Corollary}
\newtheorem*{thm*}{Theorem}
\newtheorem*{lem*}{Lemma}
\newtheorem{question}{Question}
\theoremstyle{remark}
\newtheorem{rem}[thm]{Remark}
\newtheorem*{rems*}{Remarks}
\DeclarePairedDelimiter\parentheses{\lparen}{\rparen}
\DeclarePairedDelimiter\floor{\lfloor}{\rfloor}
\DeclarePairedDelimiter\ceil{\lceil}{\rceil}
\DeclarePairedDelimiter\braces{\lbrace}{\rbrace}
\DeclarePairedDelimiter\abs{\lvert}{\rvert}
\DeclarePairedDelimiter\norm{\lVert}{\rVert}
\DeclarePairedDelimiter\ropeninterval{\lbrack}{\rparen}
\DeclarePairedDelimiter\lopeninterval{\lparen}{\rbrack}
\newcommand{\NN}{\mathbb{N}}
\newcommand{\ZZ}{\mathbb{Z}}
\newcommand{\RR}{\mathbb{R}}
\newcommand{\B}[1]{\mathcal{B}\parentheses{#1}}
\newcommand{\ModH}[1]{\mathscr{H}_{#1}}
\newcommand{\K}[6]{K_{#2,#3}\parentheses{#5,#6;#1,#4}}
\newcommand{\Kl}[3]{S\parentheses{#2,#3;#1}}
\DeclareMathOperator{\eOpname}{e}
\NewDocumentCommand\e{ s O{} m }{
	\IfBooleanTF{#1}{%
		\eOpname_{#2}\parentheses[\big]{#3}%
	}{\eOpname_{#2}\parentheses{#3}}%
}
\numberwithin{equation}{section}
\title{Modular hyperbolas and Beatty sequences}
\date{10th April~2019}
\subjclass[2010]{%
	Primary
	11B83; 
	Secondary
	11L05, 
	11L07, 
	11D79. 
}
\keywords{Beatty sequence, modular hyperbola, Kloosterman sum}
\author{Marc~Technau}
\address{Marc~Technau\\%
	Institute of Analysis and Number Theory\\%
	Graz University of Technology\\%
	Kopernikusgasse~24\\%
	8010~Graz\\%
	Austria}
\email{mtechnau@math.tugraz.at}
\begin{document}
\begin{abstract}
	Bounds for $\max\braces{m,\tilde{m}}$ subject to $m,\tilde{m} \in \ZZ\cap\ropeninterval{1,p}$, $p$ prime, $z$ indivisible by $p$, $m\tilde{m}\equiv z\bmod p$ and $m$ belonging to some fixed Beatty sequence $\braces{ \floor{ n\alpha+\beta } : n\in\NN }$ are obtained, assuming certain conditions on $\alpha$.
	The proof uses a method due to \citeauthor*{banks2006nonresidues}. As an intermediate step, bounds for the discrete periodic autocorrelation of the finite sequence $0,\, \e[p]{y\overline{1}}, \e[p]{y\overline{2}}, \ldots, \e[p]{y(\overline{p-1})}$ on average are obtained, where $\e[p]{t} = \exp(2\pi i t/p)$ and $m\overline{m} \equiv 1\bmod p$. The latter is accomplished by adapting a method due to \citeauthor{kloosterman1927ontherepresentation}.
\end{abstract}
\maketitle

\section{Introduction}
Consider the \emph{modular hyperbola}
\begin{equation}\label{eq:ModularHyperbola}
	\ModH{z\bmod p}
	= \braces{ (m,\tilde{m}) \in \ZZ^2\cap\ropeninterval{1,p}^2 : m\tilde{m}\equiv z\bmod p }
\quad (\text{with } p\nmid z),
\end{equation}
where the letter $p$ denotes a prime here and throughout.
The distribution of points on these modular hyperbolas has attracted wide interest and the interested reader is referred to~\cite{sharplinski2012modular-hyperbolas} for a survey of questions related to this topic and various applications.
Our particular starting point is the following intriguing property of such hyperbolas:
\begin{thm}
	\label{thm:LeastModHyperbolaPoint}
	For any prime $p$ and $z$ coprime to $p$ there is always a point $(m,\tilde{m})\in \ModH{z\bmod p}$ with $\max\braces{m,\tilde{m}}\leq 2\parentheses{\log p}p^{3/4}$.
\end{thm}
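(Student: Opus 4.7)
The plan is to set $M := 2(\log p) p^{3/4}$ (restricting attention to primes $p$ large enough that $M < p$; the remaining small primes are handled by direct inspection) and to prove
\[
	N(M) := \abs{\braces{(m, \tilde m) \in \ModH{z \bmod p} : m, \tilde m \leq M}} \geq 1.
\]
Substituting $\tilde m \equiv z \overline m \pmod p$, with $\overline m$ the inverse of $m$ modulo $p$, this becomes
\[
	N(M) = \sum_{m=1}^{p-1} \mathbf{1}_M(m) \, \mathbf{1}_M(z\overline m \bmod p),
\]
where $\mathbf{1}_M$ is the indicator of $\braces{1, \ldots, \floor{M}}$ inside $\ZZ/p\ZZ$. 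I would expand each indicator as a finite Fourier series, $\mathbf{1}_M(n) = \sum_h \widehat{\mathbf{1}_M}(h) \e[p]{hn}$, and interchange summation; the inner sum over $m$ then becomes a Kloosterman sum, so
\[
	N(M) = \sum_{h_1, h_2 \bmod p} \widehat{\mathbf{1}_M}(h_1) \widehat{\mathbf{1}_M}(h_2) \, \Kl{p}{h_1}{h_2 z}.
\]

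The $h_1 = h_2 = 0$ term yields the expected main contribution $(M/p)^2(p-1) \sim M^2/p$. The terms with exactly one of $h_1,h_2$ equal to $0$ are controlled via the identity $\sum_{h\neq 0} \widehat{\mathbf{1}_M}(h) = -\widehat{\mathbf{1}_M}(0) = -M/p$ (because $\mathbf{1}_M(0) = 0$) and contribute only $O(M^2/p^2)$. The key step is the genuine off-diagonal range $h_1 h_2 \not\equiv 0 \pmod p$, where Weil's bound $\abs{\Kl{p}{h_1}{h_2 z}} \leq 2\sqrt p$ gives a total contribution of at most
\[
	2\sqrt p \, \parentheses*{\sum_{h=1}^{p-1} \abs{\widehat{\mathbf{1}_M}(h)}}^{2}.
\]
The standard estimates $\abs{\widehat{\mathbf{1}_M}(h)} \leq (p\abs{\sin(\pi h/p)})^{-1}$ and $\sin(\pi h/p) \geq 2h/p$ on $0 < h \leq p/2$ then yield $\sum_{h\neq 0}\abs{\widehat{\mathbf{1}_M}(h)} \leq \log p + O(1)$, so the off-diagonal contribution is at most $2\sqrt p (\log p)^2 (1 + o(1))$.

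Combining these steps gives $N(M) = M^2/p + O(\sqrt p (\log p)^2)$, and the choice $M = 2(\log p) p^{3/4}$ renders the main term $4 \sqrt p (\log p)^2$, which strictly dominates the error for all sufficiently large $p$; hence $N(M) \geq 1$. I expect the main obstacle to be not conceptual but quantitative: obtaining the clean constant $2$ in the statement demands a slightly sharper $\ell^1$-bound on $\widehat{\mathbf{1}_M}$ than the one above, together with a separate verification for the finitely many small primes where the asymptotic estimate degrades. Beyond this bookkeeping, the argument is a routine deployment of the Weil bound once the Fourier setup is in place.
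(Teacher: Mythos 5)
Your proposal is correct and follows essentially the same route as the paper, which proves \cref{thm:LeastModHyperbolaPoint} only by citing Heath-Brown: his argument is exactly this Fourier-completion count of hyperbola points in a box estimated via the Weil bound, the same reduction the paper itself adapts in its Section~2. The only loose end you flag (small primes) is in fact trivial rather than a matter of inspection: whenever $2(\log p)p^{3/4}\ge p$, every one of the $p-1$ points of $\ModH{z\bmod p}$ already satisfies the stated bound, and for the remaining primes $\log p$ is large enough that your factor-two margin between $4\sqrt{p}(\log p)^2$ and $2\sqrt{p}(\log p+O(1))^2$ comfortably suffices.
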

\begin{proof}
	See, e.g.,~\cite[p.~382]{heath-brown2000arithmeticappl}.\footnote{%
		\label{footnote:PointsInArbitraryBoxes}%
		The proof itself gives much more. Indeed, instead of restricting $(m,\tilde{m})$ to the square box $\lopeninterval{0,2(\log p) p^{3/4}}^2$, any rectangular box $\mathscr{R}\subseteq \lopeninterval{0,p}^2$ with sufficiently large area can be seen to have a non-empty intersection with $\ModH{z\bmod p}$.
	}
\end{proof}
Loosely speaking, the above theorem states that, when sampling enough points
\[
	(1,?),\, (2,?'),\, (3,?''),\, \ldots\, \in \ModH{z\bmod p},
\]
at least one amongst the second coordinates will not be too large.

Here we shall investigate analogous distribution phenomena when one of the coordinates is required to belong to some fixed Beatty set. The \emph{Beatty set} $\B{\alpha,\beta}$ associated with real \emph{slope} $\alpha\geq 1$ and \emph{shift} $\beta\geq 0$ is given by
\[
	\B{\alpha,\beta} = \braces{ \floor{ n\alpha+\beta } : n\in\NN },
\]
where $\floor{x}$ denotes the largest integer less than or equal to~$x$.
When thinking of $\B{\alpha,\beta}$ as ordered according to size of its elements, we call it a \emph{Beatty sequence}.
Such sequences appeared first in the astronomical studies of Johann~III Bernoulli~\cite{bernoulli1772sur-une} as a means to control the accumulative error when calculating successive multiples of an approximation to some given number.
Later, Beatty sequences were studied by Elwin Bruno Christoffel~\cite{christoffel1873observatio,christoffel1887lehrsaetze} with respect to their arithmetical significance with the goal of easing the discomfort present during that time when working with irrational numbers.
The name ,,Beatty sequence'' is in the honour of Samuel Beatty who popularised these sequences by posing a problem for solution in \emph{The American Mathematical Monthly}~\cite{beatty1926problem,beatty1927solutions}; the theorem to be proved there appears to be due to John William Strutt (3\textsuperscript{rd}~Baron Rayleigh)~\cite{rayleigh1926theory} though. 
A reader interested in more recent work on Beatty sequences is invited to start his or her journey by tracing the references in~\cite{abercrombie2009arithm}, for instance.

\begin{figure}[!ht]\centering%
	\begingroup%
	\begin{tikzpicture}[scale=.14]
		\foreach \x/\y in {1/46, 2/23, 3/31, 4/35, 5/28, 6/39, 7/20, 8/41, 9/26, 10/14, 11/17, 12/43, 13/18, 14/10, 15/25, 16/44, 17/11, 18/13, 19/42, 20/7, 21/38, 22/32, 23/2, 24/45, 25/15, 26/9, 27/40, 28/5, 29/34, 30/36, 31/3, 32/22, 33/37, 34/29, 35/4, 36/30, 37/33, 38/21, 39/6, 40/27, 41/8, 42/19, 43/12, 44/16, 45/24, 46/1}{
			\coordinate (PT-\x) at (\x,\y);
			\global\edef\maxX{\x}
			\pgfmathsetmacro{\primeFromFigureTMP}{int(\x+1)}
			\global\edef\primeFromFigure{\primeFromFigureTMP}
		}
	
		\pgfmathsetmacro{\ALPHA}{pi}
		\pgfmathsetmacro{\BETA}{e}
		\pgfmathsetmacro{\maxN}{int(\primeFromFigure/\ALPHA)}
		\begin{pgfonlayer}{background}
			\path (-3,1) -- ({3+\maxX},1);
			\draw (0,0) rectangle ++(\maxX,\maxX);
			\draw[very thick] (0,0) rectangle ++(18,18);
		\end{pgfonlayer}
		\foreach \n in {1,2,3,...,\maxN}{
			\pgfmathsetmacro{\y}{int(floor(\n*\ALPHA+\BETA))}
			\coordinate (TMP) at (PT-\y);
			\coordinate (PT-\y) at (PT-1);
			\begin{pgfonlayer}{background}
				\fill[black,shift only] (TMP) circle (\ptSize);
			\end{pgfonlayer}
		}
		\begin{pgfonlayer}{background}
			\draw[shift only,fill=white] \foreach \x/\y in {1,2,3,...,\maxX}{ (PT-\x) circle (\ptSize) };
		\end{pgfonlayer}
		\draw[thick,line cap=round]
			(0,0) -- ++(0,-1) node[below] {\footnotesize$0$}
			(18,0) -- ++(0,-1) node[below] {\footnotesize$18$}
			(\maxX,0) -- ++(0,-1) node[below] {\footnotesize$\maxX$}
		;
	\end{tikzpicture}%
	\endgroup%
	\caption{An illustration of the points $(m,\tilde{m})\in\mathscr{H}_{-1\bmod\primeFromFigure}$. The points $(m,\tilde{m})$ with $m$ belonging to the Beatty set $\B{\pi,e}$ are filled.}
	\label{fig:ModHyperbolaSmallSolutions}%
\end{figure}
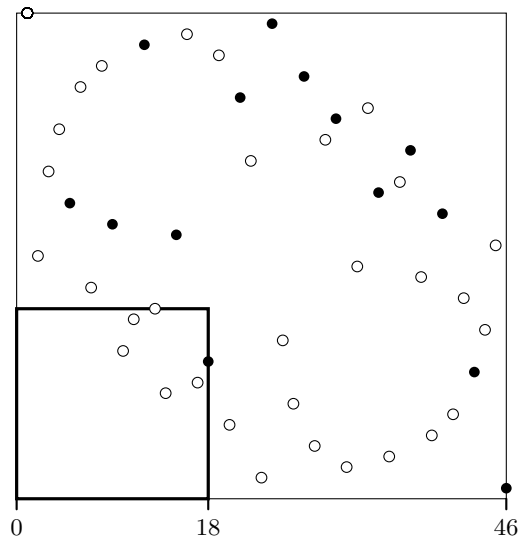

Returning to modular hyperbolas, the question we seek to answer may be enunciated as follows:
\begin{question}
	\label{question:LeastModHyperbolaPoint:Beatty}
	If the first coordinate of $(m,\tilde{m})$ in \cref{thm:LeastModHyperbolaPoint} is additionally required to belong to a fixed Beatty set with irrational slope, can one still prove a result like \cref{thm:LeastModHyperbolaPoint}, i.e., is it impossible for a Beatty set to contain only those elements $m<p$ with \enquote{large} corresponding $\tilde{m}$ from~\cref{eq:ModularHyperbola}?
\end{question}
More specifically, for fixed irrational $\alpha>1$ and non-negative $\beta$, we shall be interested in solving
\begin{equation}\label{eq:SolveMe}
	m\tilde{m}\equiv z\mod p
	\quad\text{with}\quad
	m\in\B{\alpha,\beta}
	\quad\text{and}\quad
	1\leq m,\tilde{m}<p
\end{equation}
whilst keeping $\max\braces{m,\tilde{m}}$ as small as possible. Thus, we shall want to bound
\begin{equation}\label{eq:SolveMe:F(zp)}
	F(z\bmod p) = \min\braces{ \max\braces{m,\tilde{m}} : (m,\tilde{m}) \text{ such that~\cref{eq:SolveMe} holds} }.
\end{equation}

As an illustration, consider \cref{fig:ModHyperbolaSmallSolutions}: therein, bounding $F(-1\bmod 47)$ is equivalent to asking how large one must take the side length of a square with lower left corner positioned at $(0,0)$, in order to be guaranteed to find a black point inside. The smallest such square is sketched thick in the figure.

The main result of this paper is that, for any fixed $\alpha$ satisfying certain Diophantine conditions and arbitrary (not necessarily fixed) non-negative $\beta$, the quantity $F(z\bmod p)$ is `not too large' in the sense that
\begin{equation}\label{eq:smallestSol:FirstVersion}
	F(z\bmod p) \leq \alpha p^{684/727}\log p+\beta
\end{equation}
for all sufficiently large $p$ (in terms of $\alpha$) and arbitrary $z$ indivisible by $p$. (Here the main point is that the exponent of $p$ in~\cref{eq:smallestSol:FirstVersion} is strictly smaller than~$1$.)
We refer to \cref{thm:max:mm:bound} for the precise formulation.

\section{Main results}
\label{sec:MainResults}

\subsection{A partial answer to \cref{question:LeastModHyperbolaPoint:Beatty}}
In order to describe the numbers $\alpha$ for which we obtain results, recall that the \emph{type} $\tau$ of an irrational number $\alpha$ is defined by
\[
	\tau = \sup\braces[\Big]{ \eta\in\RR : \liminf_{q\to\infty} q^{\eta} \norm{ \alpha q } =0 },
\]
where $\norm{\varrho} = \min_{n\in\ZZ} \abs{\varrho - n}$ denotes the distance to a nearest integer.
(Note that Dirichlet's approximation theorem ensures that $\tau\geq 1$.)
We say that $\alpha$ is of \emph{finite type} if $\tau < \infty$.

Our main result may now be enunciated as follows:
\begin{thm}
	\label{thm:max:mm:bound}
	Let $\epsilon>0$ and $\beta$ be non-negative. Moreover, assume that $\alpha>1$ is irrational and of finite~$\text{type}\leq\frac{512}{43}-\epsilon$. Then there is a number $p_0(\alpha,\epsilon)$ such that, for all primes $p\geq p_0(\alpha,\epsilon)$ and $z$ coprime to $p$, there is a point $(m,\tilde{m})\in \ModH{z\bmod p}$ with $m\in\B{\alpha,\beta}$ and
	\begin{equation}\label{eq:smallestSol}
		\max\braces{m,\tilde{m}} \leq \alpha p^{684/727}\log p+\beta.
	\end{equation}
\end{thm}

\begin{rem}\label{rem:Improvements}
	After posting an initial draft of this article on \enquote{the arXiv}, I.~E.~Shparlinski kindly pointed out to the author that a generalised form of Weil's bound also applies directly to the sum~\cref{eq:Sum:Sxywp}. This leads to a chain of improvements in~\cref{prop:KloostermanTypeEstimate}, \cref{thm:KloostermanBeattyBound}, and \cref{thm:max:mm:bound}.
	We sketch the relevant changes in~\cref{sec:Improvements} and close with a discussion of two directions in which our results may potentially be generalised.
\end{rem}

\begin{rem}
	If $(m,\tilde{m})$ is a solution of~\cref{eq:SolveMe}, then
	\(
	m \geq \min\B{\alpha,\beta} = \floor{\alpha+\beta} > \beta
	\).
	Hence, the dependence of the right hand side of~\cref{eq:smallestSol} on $\beta$ is not a deficiency in the proof, but instead occurs naturally.
\end{rem}

\subsection{Intermediate results and discussion of the methods}
The proof of \cref{thm:max:mm:bound} is carried out in \cref{sec:Proofs} via a discrete Fourier transform method.
It is an adaptation of the argument which is used in~\cite{heath-brown2000arithmeticappl} to prove \cref{thm:LeastModHyperbolaPoint}.
Here one naturally encounters the discrete Fourier transform of the characteristic function of points whose (integer) coordinates are inverse to each other modulo~$p$, namely \emph{Kloosterman sums}:
\begin{equation}\label{eq:KloostermanSum}
	\Kl{p}{x}{y} = \sum_{1\leq m<p}\e[p]{xm+y\overline{m}}.
\end{equation}
(Here $x$ and $y$ are integers, $\e[p]{r} = \exp(2\pi i\, r/p)$, and $\overline{m}$ is the unique positive integer below $p$ which is inverse to $m$ modulo~$p$.)
These sums were first introduced by~\citet{kloosterman1927ontherepresentation} in his seminal refinement of the Hardy--Littlewood circle method to handle diagonal quadratic forms in four variables.
He proved the bound
\[
	\sum_{1\leq m<p} \e[p]{xm+y\overline{m}}
	\ll p^{3/4} \gcd(x,y,p)^{1/4},
\]
which was later improved by \citet{weil1948onsomeexp} to
\[
	\abs[\bigg]{ \sum_{1\leq m<p}\e[p]{xm+y\overline{m}} }
	\leq 2p^{1/2} \gcd(x,y,p)^{1/2},
\]
the latter bound being asymptotically optimal (see~\cite[Section~11.7]{iwaniec2004analytic}).

Upon using such a Fourier transform approach to bound~\cref{eq:SolveMe:F(zp)} without imposing the restriction $m\in\B{\alpha,\beta}$ in~\cref{eq:SolveMe}, one naturally has to deal with \emph{incomplete} Kloosterman sums:
\[
	\sum_{1\leq m<M}\e[p]{xm+y\overline{m}}
	\quad(\text{with } 1\leq M\leq p).
\]
Using a completing technique, bounds for such sums can be derived from bounds for the complete sum~\cref{eq:KloostermanSum}.
In our setting---reinstating the restriction $m\in\B{\alpha,\beta}$ in~\cref{eq:SolveMe}---one has to deal with incomplete Kloosterman sums along the Beatty sequence in question:
\begin{equation}\label{eq:KloostermanBeatty}
	\K{p}{\alpha}{\beta}{N}{x}{y}
	= \sum_{\substack{ 1\leq n\leq N \\ p\nmid \floor{n\alpha+\beta} }} \e*[p]{x\floor{n\alpha+\beta} + y\overline{\floor{n\alpha+\beta}}}.
\end{equation}
We are able to bound such sums (see \cref{thm:KloostermanBeattyBound} below) by means of a method due to \citet*{banks2006nonresidues,banks2006short-character-sums,banks2007prime-divisors}.
In order to succeed in our particular case, the method requires bounds for sums of the shape
\begin{equation}\label{eq:Sum:Sxywp}
	S(x,y,w;p) = \sum_{\substack{ 1\leq m<p \\ p \nmid (m+w) }} \e[p]{ x\overline{m}+y\overline{m+w} }
\end{equation}
and this input is obtained by adapting Kloosterman's original method~\cite{kloosterman1927ontherepresentation} for bounding his sums (see \cref{prop:KloostermanTypeEstimate} below, and \cref{rem:Kloosterman} for more context on the similarity to Kloosterman's argument).
The quality of the bounds we obtain depends on the Diophantine properties of the slope $\alpha$ of the Beatty sequence $\B{\alpha,\beta}$.

\subsection{Plan of the paper}
\label{sec:PlanOfPaper}
The rest of the paper is structured as follows:
first, we fix some notation and recall some basic facts related to Diophantine properties of irrational numbers $\alpha$.
Then, in \cref{sec:KloostermanBeattyBounds}, we state our results concerning Kloosterman sums along Beatty sequences~\cref{eq:KloostermanBeatty}.
In \cref{sec:Proofs}, we provide the proofs for our results.
Finally, in \cref{sec:Improvements}, we sketch how the improvements hinted at in \cref{rem:Improvements} can be obtained by adjusting the relevant parts of our arguments.
The last part of \cref{sec:Improvements} contains a discussion of potential generalisations of our work.

\section{Preliminaries}

\subsection{Notation}

Before proceeding, we fix some notation used throughout the paper.
We use the Vinogradov symbol $f(x) \ll g(x)$ and the Big Oh notation $f(x) = O(g(x))$ to mean $\abs{f(x)} \leq C g(x)$ for some absolute constant $C>1$. In cases where the constant depends on a parameter (usually $\alpha$ and sometimes $\epsilon>0$), we indicate this with subscripts.

\subsection{Discrepancy}
\label{sec:DiscrepancyFacts}
For a real number $\varrho$ we denote its \emph{fractional part} by $\braces{\varrho} = \varrho - \floor{\varrho} \in \lbrack0,1\rparen$.
We denote the \emph{discrepancy} of the finite sequence $(\braces{n\alpha+\beta})_{n\leq N}$ by
\begin{equation}\label{eq:Discrepancy}
	D_{\alpha,\beta}(N)
	= \sup_{0\leq x<y<1} \abs*{ \frac{\#\braces{ n\leq N : \braces{n\alpha+\beta} \in \ropeninterval{x,y} }}{N} - (y-x) },
\end{equation}
and we put $D_{\alpha}(N) = D_{\alpha,0}(N)$.
The discrepancy~\cref{eq:Discrepancy} is a measure for how well distributed the finite sequence $(\braces{n\alpha+\beta})_{n\leq N}$ is.
A drawback of the above definition is that it is needlessly dependent on the shift $\beta$ (consider, for instance, $(\alpha,N)=(\frac{15}{8},2)$ and $\beta\in\braces{0,\frac{1}{8}}$).
This dependency could be removed by defining the discrepancy as, e.g., \citet{montgomery1994} does, but we refrain from doing so for the purpose of being able to use results from~\cite{kuipers1974uniformdistribution} below.
Nonetheless, the dependence of~\cref{eq:Discrepancy} on $\beta$ can be easily sidestepped by means of the following well-known fact:
\begin{lem}\label{lem:DiscrepBetaIndep}
	For any $\alpha,\beta\in\RR$ and $N\in\NN$, we have
	\(
		D_{\alpha,\beta}(N) \leq 8 D_{\alpha}(N)
	\).
\end{lem}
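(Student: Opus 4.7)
The plan is to reduce the problem to comparing counts of $\{n\alpha\}$ on ordinary sub-intervals of $[0,1)$, to which the definition of $D_\alpha(N)$ applies directly. Since $\{n\alpha+\beta\}$ depends on $\beta$ only through $\{\beta\}$, there is no loss in assuming $\beta\in[0,1)$; the map $s\mapsto\{s-\beta\}$ is then a measure-preserving bijection of $[0,1)$ onto itself. It sends any interval $[x,y)$ appearing in the supremum in~\cref{eq:Discrepancy} to a subset $A\subseteq[0,1)$ of Lebesgue measure $y-x$, and converts the event $\{n\alpha+\beta\}\in[x,y)$ into $\{n\alpha\}\in A$.

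The next step is a short case analysis of the shape of $A$, depending on whether the translated interval $[x-\beta,y-\beta)$ wraps around the endpoint $0$. One finds that either $A$ is a single sub-interval of $[0,1)$ (namely $[x-\beta,y-\beta)$ when $x\ge\beta$, or $[x-\beta+1,y-\beta+1)$ when $y\le\beta$), or $A$ is the disjoint union of two sub-intervals $[0,y-\beta)$ and $[x-\beta+1,1)$ (when $x<\beta<y$). In every sub-case each component is of the form $[u,v)$ with $0\le u<v\le 1$, and its individual contribution to the discrepancy is at most $D_\alpha(N)$: directly from~\cref{eq:Discrepancy} if $v<1$, and, if $v=1$, after rewriting
\[
    \#\{n\le N:\{n\alpha\}\in[u,1)\}=N-\#\{n\le N:\{n\alpha\}\in[0,u)\}
\]
to note that the absolute discrepancies of $[u,1)$ and $[0,u)$ agree, whence again bounded by $D_\alpha(N)$ via~\cref{eq:Discrepancy}.

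The triangle inequality over the (at most two) components then yields the sharper bound $D_{\alpha,\beta}(N)\le 2D_\alpha(N)$, comfortably inside the asserted $8D_\alpha(N)$. The only potential obstacle is the bookkeeping involved in the case analysis; this is routine, and since the claimed constant is already rather loose, no optimisation is required.
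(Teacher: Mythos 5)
Your argument is correct, and it even yields the sharper constant $2$ in place of $8$: every event $\braces{n\alpha+\beta}\in\ropeninterval{x,y}$ is indeed equivalent to $\braces{n\alpha}$ lying in the set $A$ obtained by rotating $\ropeninterval{x,y}$ by $-\beta$ modulo one, this $A$ is a union of at most two genuine subintervals of $\ropeninterval{0,1}$ of total length $y-x$, and your complementation device correctly handles components of the form $\ropeninterval{u,1}$, which the supremum in \cref{eq:Discrepancy} does not directly cover (note that the restriction $y<1$ there is harmless, as the count for $\ropeninterval{x,y}$ stabilises once $y$ exceeds the largest fractional part below $1$). The paper does not write this out: it simply cites two standard facts, namely the comparison between the discrepancy and the star discrepancy from Kuipers--Niederreiter (Chapter~2, Theorem~1.3) and the shift-invariant, arc-based notion of discrepancy in Montgomery (Chapter~1, Eq.~(8)), and the stated factor $8$ is the loose constant produced by chaining these off-the-shelf inequalities. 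Your proof is the same circle of ideas made explicit and self-contained --- shift-invariance of the discrepancy over arcs of the torus, plus the observation that an arc splits into at most two intervals --- and it buys a better constant ($2$ instead of $8$), though for the application in the paper only the order of magnitude of $D_{\alpha,\beta}(N)$ matters, so the improvement is cosmetic.
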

\begin{proof}
	This follows easily from {\citep[Chapter~2, Theorem~1.3]{kuipers1974uniformdistribution}} and {\citep[Chapter~1, Eq.~(8)]{montgomery1994}}.
\end{proof}
In view of the above, it suffices to consider $D_{\alpha}(N)$ in the sequel.
It is a classic result due to Bohl, {Sierpi\'nski} and Weyl that the discrepancy of $(\braces{n\alpha})_{n\leq N}$ tends to zero (see~\cite{weyl1916gleichverteilung} and the references therein). Namely, we have:
\begin{lem}[{\cite[Satz~2]{weyl1916gleichverteilung}}]
	\label{lem:Weyl}
	Let $\alpha$ be irrational. Then
	\[
		D_{\alpha}(N) = o_\alpha(1),
		\quad\text{as $N\to\infty$},
	\]
	where the implied constant only depends on $\alpha$.
\end{lem}

For $\alpha$ of finite type as defined in \cref{sec:MainResults}, \cref{lem:Weyl} can be sharpened considerably:
\begin{lem}[{\citep[Chapter~2, Theorem~3.2]{kuipers1974uniformdistribution}}]
	\label{lem:FiniteTypeDiscrepancyBound}
	Let $\alpha$ be of finite type $\tau$. Then, for every $\epsilon>0$,
	\[
		D_{\alpha}(N)
		\ll_{\alpha,\epsilon} N^{-1/\tau+\epsilon},
	\]
	where the implied constant only depends on $\alpha$ and $\epsilon$.
\end{lem}

\section{Bounds for Kloosterman sums along Beatty sequences}
\label{sec:KloostermanBeattyBounds}

In this section we state our bounds for~\cref{eq:KloostermanBeatty} (see \cref{thm:KloostermanBeattyBound} below). However, before doing so, we give a short survey of previous work on sums along Beatty sequences.
In fact, as a general heuristic, for any fixed Beatty sequence $\B{\alpha,\beta}$ and \enquote{reasonable} arithmetical function $f$, one should expect that
\begin{equation}\label{eq:fBeattyAverage}
	\sum_{\substack{1\leq m \leq x \\ \mathclap{m\in\B{\alpha,\beta}} }} f(m)
	= \frac{1}{\alpha}\sum_{1\leq m \leq x} f(m) + \braces{\text{error term}}
	\quad\text{as $x\to\infty$},
\end{equation}
with some error term which one expects to be able to bound non-trivially.
Indeed, this heuristic principle is substantiated by a sizable list of particular results:
\begin{itemize}
	\item $d$, the \emph{divisor function} (i.e., the function giving the number of positive divisors of its argument), starting with \citet{abercrombie1995beatty-sequences}, improved by \citet{begunts2004on-an-analogue}, and work on generalised divisor functions by \citet{zhai1997a-note-on-a} and \citet{lu2004-the-divisor-problem}.
	\item Certain multiplicative functions like $n\mapsto n^{-1}\varphi(n)$, where $\varphi$ is \emph{Euler's totient}, due to \citet{begunts2005on-the-distribution}, and a certain class of multiplicative $f$ in the work of \citet{guloglu2008sums-of-multiplicative}.
	\item Dirichlet-characters and special functions related to the orbits of elements $g\bmod m$ along Beatty sequences, treated by \citet{banks2006nonresidues} (see also~\cite{banks2006short-character-sums}), with improvements when $f$ is the Legendre-Symbol due to \citet*{banks2008density-of-non-residues}.
	\item $\omega$, the function counting the number of distinct prime divisors of its argument (without multiplicity), and $n\mapsto (-1)^{\Omega(n)}$, where $\Omega(n)$ counts the number of distinct prime divisors of $n$ with multiplicity, due to \citet{banks2007prime-divisors}.
	\item $\Lambda$, the \emph{von Mangoldt lambda function}, studied by \citet{banks2009primenumbers} who attribute earlier results to \citet[Chapter~4.V]{ribenboim1996new-book}, although it seems that such observations were already apparent to Heilbronn in 1954, as evidenced in~\cite[Notes to Chapter~XI]{vinogradov2004themethod}.
\end{itemize}
Moreover, by a result due to \citet*{abercrombie2009arithm}, the heuristic principle is seen to hold in surprising generality for a large class of functions $f$ in a metric sense.

However, the problem of bounding~\cref{eq:KloostermanBeatty} is a little more subtle, as the function
\[
	f(m) =
	\begin{cases}
		\e[p]{xm+y\overline{m}} & p\nmid m, \\
		0 & p\mid m,
	\end{cases}
\]
for which we might want to invoke some result of the type given in \cref{eq:fBeattyAverage}, also depends on $x$, $y$ and $p$, and we lack the necessary uniformity in those parameters.
Nonetheless, we obtain the following:
\begin{thm}
	\label{thm:KloostermanBeattyBound}
	Suppose that $p$ is a prime, $\beta\geq0$ and $x,y\in\ZZ$ such that $p\nmid y$. Then, for any irrational $\alpha>1$ and $N\leq p$, the sum $\K{p}{\alpha}{\beta}{N}{x}{y}$ given by~\cref{eq:KloostermanBeatty} satisfies the bound
	\begin{equation}\label{eq:KloostermanBeattyBound}
		\abs{\K{p}{\alpha}{\beta}{N}{x}{y}}
		\ll_\alpha N^{297/512} p^{43/128} + ND_{\alpha}(N),
	\end{equation}
	where the implied constant only depends on $\alpha$.
\end{thm}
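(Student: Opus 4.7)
The plan is to combine a Fourier approximation of the Beatty indicator with a shift-and-Cauchy--Schwarz device of the kind used by \citeauthor*{banks2006nonresidues} in~\cite{banks2006nonresidues,banks2006short-character-sums,banks2007prime-divisors}, and then appeal to the mean-square autocorrelation estimate supplied by~\cref{prop:KloostermanTypeEstimate}. To begin with, set $M = \floor{N\alpha+\beta}$; for $1\leq m\leq M$, the assertion $m\in\B{\alpha,\beta}$ is equivalent to $\braces{(m-\gamma)/\alpha}\in I$ for a shift $\gamma$ depending only on $\alpha,\beta$ and an interval $I\subseteq\ropeninterval{0,1}$ of length $1/\alpha$. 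Approximating $\mathbf{1}_I$ by a trigonometric polynomial of degree $K$ (via the Beurling--Selberg/Vaaler construction, or via Koksma combined with Erd\H{o}s--Tur\'an) incurs an error controlled by $N D_\alpha(N)$, which contributes the second term in~\cref{eq:KloostermanBeattyBound}, and reduces matters to uniformly bounding
\[
    V(\theta)
    = \sum_{\substack{m \le M \\ p \nmid m}} \e[p]{y\overline{m}}\,\e{\theta m}
\]
for arbitrary real $\theta$ of the form $\theta = x/p + k/\alpha$ with $\abs{k}\leq K$.

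To estimate $V(\theta)$, I would observe that for each integer $a$ with $0\leq a<H$ the relabelling $m\mapsto m+a$ introduces only a boundary error of size $O(a)$, so that averaging over such $a$ and applying Cauchy--Schwarz in $m$ yields
\[
    \abs{V(\theta)}^{2}
    \ll H^{2} + \frac{M}{H^{2}}
    \sum_{0 \le a,b < H} \e{\theta(a-b)}
    \sum_{\substack{1 \le m \le M \\ p\,\nmid\,(m+a)(m+b)}} \e[p]{y\p{\overline{m+a}-\overline{m+b}}}.
\]
The diagonal pairs $a=b$ contribute $\ll M^{2}/H$; the off-diagonal inner sums are the discrete autocorrelations of $m\mapsto\e[p]{y\overline{m}}$ advertised in the abstract, and their mean square over admissible shifts is precisely what~\cref{prop:KloostermanTypeEstimate} bounds. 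Substituting that bound, summing over $a,b$, and optimising $H$, then balancing against the Fourier-truncation parameter $K$, will yield~\cref{eq:KloostermanBeattyBound}.

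The conceptual content of the argument lies almost entirely in \cref{prop:KloostermanTypeEstimate} (and in its eventual derivation by Kloosterman's method in~\cref{sec:Proofs}); once that input is granted, the remainder is mechanical parameter juggling. The principal delicacy will be the simultaneous optimisation of $H$ and $K$, since three different error contributions compete: the diagonal term $M^{2}/H$, the mean-square off-diagonal term arising from~\cref{prop:KloostermanTypeEstimate}, and the Fourier tail of order $M/K$. The exact exponents $297/512$ and $43/128$ in~\cref{eq:KloostermanBeattyBound} are fixed by the unique balance of these three contributions, so care is warranted in the bookkeeping, but no new ideas are needed beyond those described above.
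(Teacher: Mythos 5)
Your route (Vaaler expansion of the Beatty indicator followed by a van der Corput shift in the variable $m$) is genuinely different from the paper's, but as written it has a gap exactly at the point where you invoke \cref{prop:KloostermanTypeEstimate}. The inner sums in your display run over the incomplete range $1\le m\le M$ with $M=\floor{N\alpha+\beta}$ (which, depending on $\beta$, need not even be comparable to $p$), whereas \cref{prop:KloostermanTypeEstimate} bounds only the \emph{complete} correlations $\sum_{m<p}$; completing your sums in the standard way would introduce an extra linear phase and hence sums $\sum_{m<p}\e[p]{cm+y\overline{m+a}-y\overline{m+b}}$ which the paper never estimates and which Kloosterman's dilation trick does not cover. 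Moreover, the proposition sums each shift $w$ \emph{once} over a set of pairwise incongruent residues, while in your double sum each difference $w=a-b$ occurs with multiplicity up to $H$ and carries the unimodular weight $\e{\theta(a-b)}$, so it is not \enquote{precisely what \cref{prop:KloostermanTypeEstimate} bounds}. The repair is to complete \emph{before} expanding: the quantity $\abs{\sum_{a<H}\e[p]{y\overline{m+a}}\e{\theta a}}^{2}$ depends on $m$ only modulo $p$, so after restricting $m$ to the window $(\beta,\alpha N+\beta]$ of length $\ll_\alpha p$ one may extend the outer $m$-sum to a full residue system by positivity, and only then do complete autocorrelations (with controlled multiplicity of each $w$) appear.

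Even with that repair, the rest of the sketch does not deliver \cref{thm:KloostermanBeattyBound} as stated. Optimising $H$ gives a bound of the shape $(Np)^{4/9}$ per harmonic, multiplied by a logarithm from summing the Fourier coefficients; the exponents $297/512$ and $43/128$ are \emph{not} fixed by the balance of your three error terms. In the paper they arise from a different mechanism altogether: by \cref{lem:BanksShparlinski:BigSet} one shifts $n\mapsto n+k$ only over those $k\le K$ with $\braces{k\alpha+\gamma}<\Delta$, so that $\floor{(n+k)\alpha+\beta}=\floor{n\alpha+\beta-\gamma}+\floor{k\alpha+\gamma}$ splits additively without any Fourier analysis, and the density loss $\Delta$ together with the error terms $K+N\Delta+ND_{\alpha}(N)$ produces exactly these exponents upon choosing $\Delta=N^{-105/128}p^{21/32}$. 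Your $(Np)^{4/9}\log$-type bound is below $N^{297/512}p^{43/128}$ only strictly inside $p<N^{5/4}$, and the log cannot be absorbed when $p$ is near $N^{5/4}$, a range the paper dispatches log-free via the dichotomy $\Delta<1$ versus the trivial bound. Finally, your truncation error is governed by the distribution of $\braces{m/\alpha}$ over a window of length $\asymp\alpha N$, i.e.\ by a discrepancy attached to $1/\alpha$ (and by the size of $\beta$), not by $D_{\alpha}(N)$; converting one into the other uniformly in $\beta\ge 0$ is an additional argument you neither state nor supply, whereas the paper works directly with $(\braces{n\alpha+\beta-\gamma})_{n\le N}$ and needs only \cref{lem:DiscrepBetaIndep}. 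So the conceptual load is not carried by \cref{prop:KloostermanTypeEstimate} alone; the specific shift set of \cref{lem:BanksShparlinski:BigSet} and the completion-by-positivity step are what make the bound \cref{eq:KloostermanBeattyBound} come out as claimed.
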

The proof of this result is given in \cref{sec:Proofs} and makes use of an estimate for the \emph{periodic autocorrelation} of the finite sequence
\[
	0,\, \e[p]{y\overline{1}},\, \e[p]{y\overline{2}},\, \ldots,\, \e[p]{y(\overline{p-1})}
\]
\emph{on average}.
Such a bound is furnished by the next result applied with $x=-y$:
\begin{prop}
	\label{prop:KloostermanTypeEstimate}
	Suppose that $p$ is a prime and $x,y\in\ZZ$ such that $p\nmid y$. If $S(x,y,w;p)$ is given by~\cref{eq:Sum:Sxywp}, then, for any set $\mathscr{W}$ of pairwise incongruent numbers modulo~$p$, we have
	\[
	\sum_{w\in\mathscr{W}}\abs{S(x,y,w;p)}
	\ll (\#\mathscr{W})^{3/4}p.
	\]
\end{prop}


Turning back to \cref{thm:KloostermanBeattyBound} and recalling its assumption that $N\leq p$, we find that the first term on the right hand side of~\cref{eq:KloostermanBeattyBound} is dominant, provided that the type of $\alpha$ is strictly smaller than $\frac{512}{43}$. As an immediate consequence we may state:
\begin{cor}
	\label{cor:KloostermanBeattyBound:FiniteType}
	Let $\epsilon>0$. Under the hypotheses of \cref{thm:KloostermanBeattyBound}, and restricting to only those irrational $\alpha>1$ of finite~$\text{type}<\frac{512}{43}$, we have the bound
	\begin{equation}
		\label{eq:KloostermanBeattyBound:FiniteType}
		\abs{\K{p}{\alpha}{\beta}{N}{x}{y}}
		\ll_{\alpha} N^{297/512} p^{43/128}.
	\end{equation}
\end{cor}
In order to put this into context, note that the famous result of \citet{khintchine1924einigesaetze} asserts that almost all real numbers are of type~$1$ and the celebrated theorem of \citet{roth1955rational} establishes that all real algebraic irrational numbers are of type~$1$; for numbers of finite type larger than~$1$, the Lebesgue measure fails to provide useful information about their abundance.
However, by the Jarn\'ik--Besicovitch theorem~\citep{jarnik1929diophantische,besicovitch1934setsIV}, we know that the Hausdorff dimension of all real numbers of type $>\tau$ is
\[
\dim_{\mathrm{H}}\braces{x\in\RR\text{ of type}>\tau}=\frac{2}{\tau+1}.
\]
In particular, bounds for the Hausdorff dimension of the set of those $\alpha$, for which we obtain no results, are immediate.\\
Furthermore, observe that \cref{thm:KloostermanBeattyBound} is only non-trivial for $p$ and $N$ in some range of the type
\[
	N\leq p\ll_\epsilon N^{5/4-\epsilon}.
\]
However, in view of \cref{lem:Weyl}, for fixed irrational $\alpha$, \cref{thm:KloostermanBeattyBound} always beats the trivial bound $\abs{\K{p}{\alpha}{\beta}{N}{x}{y}} \leq N$ in the above range provided that $N$ is large enough.

\section{Proofs}
\label{sec:Proofs}

\subsection{Proof of \texorpdfstring{\cref{thm:max:mm:bound}}{Theorem\autoref{thm:max:mm:bound}}}
We start by fixing some irrational $\alpha>1$ and non-negative $\beta$.
To detect pairs $(m,\tilde{m})$ satisfying~\cref{eq:SolveMe}, we adapt the arguments given in~\cite[p.~382]{heath-brown2000arithmeticappl}. Taking
\begin{equation}\label{eq:IntervalCharacteristicFunction}
	f_M(\tilde{m}) = \begin{cases}
		1 & \text{if there exists $1\leq m\leq M$ such that~\cref{eq:SolveMe} holds}, \\
		0 & \text{otherwise},
	\end{cases}
\end{equation}
one immediately observes that $F(z\bmod p)$ from~\cref{eq:SolveMe:F(zp)} may be written as
\[
	F(z\bmod p) = \min\braces[\bigg]{ 1\leq M<p : \sum_{1\leq \tilde{m}\leq M} f_M(\tilde{m}) > 0 }.
\
\]
Therefore, in order to bound $F(z\bmod p)$, it suffices to establish the positivity of
\begin{equation}\label{eq:BoxCounter}
	\sum_{1\leq \tilde{m}\leq M} f_M(\tilde{m})
\end{equation}
for some $M\in\ropeninterval{\alpha+\beta,p}$, which one would like to be as small as possible.
To achieve this, one notes that~\cref{eq:BoxCounter} should be well approximated by the average
\begin{equation}\label{eq:BoxCounter:Extended}
	\frac{M}{p} \sum_{1\leq \tilde{m}<p} f_M(\tilde{m})
	= \frac{M}{p} \#\braces{ m\leq M : m\in\B{\alpha,\beta} }
	> \frac{M(M-\alpha-\beta)}{p\alpha}.
\end{equation}
This can be made precise by a standard completing technique. We employ this in the following form given in~\cite{heath-brown2000arithmeticappl}:
\begin{lem}\label{lem:CompletingTechnique}
	Let $(A_m)_m$ be a sequence of complex numbers with period $q$, and let $\hat{A}_k$ be the discrete Fourier transform $\sum_{m=1}^q A_m \e[q]{mk}$. Then, for any integers $a<b$, we have
	\[
		\abs[\bigg]{ \sum_{a<m\leq b} A_m - \frac{b-a}{q} \hat{A}_0 }
		\leq (\log q) \max_{1\leq k < q} \abs{ \hat{A}_k }.
	\]
\end{lem}
\begin{proof}
	A slightly more general statement along with its proof can be found in~\cite[Lemma~12.1]{iwaniec2004analytic}.
\end{proof}
Returning to our analysis of~\cref{eq:BoxCounter} and applying \cref{lem:CompletingTechnique}, we infer that
\begin{equation}\label{eq:DerivationFromExpectation}
	\abs[\bigg]{
		\sum_{1\leq \tilde{m}\leq M} f_M(\tilde{m})
		- \frac{M}{p} \sum_{1\leq \tilde{m}<p} f_M(\tilde{m})
	}
	\leq \parentheses{\log p} \max_{1\leq k<p} \abs[\bigg]{
		\sum_{1\leq \tilde{m}<p} f_M(\tilde{m}) \e[p]{k\tilde{m}}
	}.
\end{equation}
Clearly, if the right hand side of~\cref{eq:DerivationFromExpectation} does not exceed the right hand side of~\cref{eq:BoxCounter:Extended}, then~\cref{eq:BoxCounter} is guaranteed to be positive.
For $1\leq k<p$, we have
\[
	\sum_{1\leq \tilde{m}<p} f_M(\tilde{m}) \e[p]{k\tilde{m}}
	= \sum_{1\leq \tilde{m}<p} \sum_{\substack{ 1\leq m\leq M \\ m\in\B{\alpha,\beta} \\ \mathclap{m\tilde{m}\equiv z\bmod p} }} \e[p]{k\tilde{m}}
	= \sum_{\substack{ 1\leq m\leq M \\ m\in\B{\alpha,\beta} }} \e[p]{kz\overline{m}}.
\]
Thus, in conclusion, if $M<p$ is a positive integer such that
\begin{equation}\label{eq:HyperbolaSolutionDetector}
	\max_{1\leq y<p} \abs[\bigg]{
		\sum_{\substack{ 1\leq m\leq M \\ m\in\B{\alpha,\beta} }} \e[p]{y\overline{m}}
	}
	\leq \frac{M\parentheses{M-\beta-\alpha}}{p (\log p) \alpha},
\end{equation}
then $F(z\bmod p)\leq M$. (It is perhaps worthwhile to observe at this point that the bound on $F(z\bmod p)$ produced by this method is wholly independent of~$z\not\equiv 0\bmod p$. Indeed, this is a recurring feature in many problems which are similar to \cref{question:LeastModHyperbolaPoint:Beatty}, as is perhaps already foreshadowed by the footnote on page~\pageref{footnote:PointsInArbitraryBoxes}. We refer to~\cite{sharplinski2012modular-hyperbolas} for more information.)

We now put $M=\floor{N\alpha+\beta}$ (with $N\geq 3$ as to have $M\geq\alpha+\beta$, as required earlier) in~\cref{eq:HyperbolaSolutionDetector}  and employ \cref{cor:KloostermanBeattyBound:FiniteType}, tacitly assuming its hypotheses in the process.
Then, after a short calculation, we find that~\cref{eq:HyperbolaSolutionDetector} is satisfied provided that
\begin{equation}\label{eq:HyperbolaSolutionDetector:Simplified}
(\log p) N^{-2+297/512} p^{1+43/128}
\ll_{\alpha,\epsilon} 1.
\end{equation}
Letting
\[
N = \floor{ p^{684/727}\log p },
\]
the condition~\cref{eq:HyperbolaSolutionDetector:Simplified} will be satisfied for $p$ sufficiently large in terms of $\alpha$ and $\epsilon$.
This completes the proof of \cref{thm:max:mm:bound}.

\subsection{Proof of \texorpdfstring{\cref{prop:KloostermanTypeEstimate}}{Proposition\autoref{prop:KloostermanTypeEstimate}}}

We start by deducing \cref{prop:KloostermanTypeEstimate} from the following result:
\begin{prop}
	\label{prop:KloostermanTypePowerEstimate}Under the assumptions of \cref{prop:KloostermanTypeEstimate}, we have
	\begin{equation}
		\sum_{w\in\mathscr{W}}\abs{S(x,y,w;p)}^{4}\ll p^{4}.\label{eq:KloostermanTypePowerEstimate}
	\end{equation}
\end{prop}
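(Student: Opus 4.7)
The plan is to follow Kloosterman's strategy of estimating a fourth power moment directly. Since $\abs{S(x,y,w;p)}^4 \geq 0$ for every~$w$, we may freely extend the summation over~$\mathscr{W}$ to a complete residue system modulo~$p$ at no cost. Expanding the fourth power as a quadruple sum over $\vec{m} = (m_1, m_2, m_3, m_4)$ and interchanging summations yields
\[
	\sum_{w \bmod p} \abs{S(x,y,w;p)}^4
	= \sum_{\vec{m}, w} \e[p]{xH(\vec{m}) + yG(\vec{m}, w)},
\]
where $H(\vec{m}) = \overline{m_1} - \overline{m_2} + \overline{m_3} - \overline{m_4}$ and $G(\vec{m}, w) = \overline{m_1+w} - \overline{m_2+w} + \overline{m_3+w} - \overline{m_4+w}$, subject to the invertibility conditions $p \nmid m_i$ and $p \nmid m_i + w$.

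The crucial step is the substitution $k_i = m_i - m_1$ (for $i = 2, 3, 4$) and $u = m_1 + w$. Under this bijection, both $H(\vec{m})$ and $G(\vec{m}, w)$ become evaluations of the single rational function
\[
	\Phi_{\vec{k}}(t) = \overline{t} - \overline{t + k_2} + \overline{t + k_3} - \overline{t + k_4}
\]
at $t = m_1$ and $t = u$, respectively. Moreover, the invertibility constraints on $(\vec{m}, w)$ translate to the same condition $t \notin \{0, -k_2, -k_3, -k_4\}$ on each of $m_1$ and $u$, so the sums over $m_1$ and $u$ decouple:
\[
	\sum_{w \bmod p} \abs{S}^4 = \sum_{\vec{k}} I(x, \vec{k})\, I(y, \vec{k}), \qquad I(z, \vec{k}) = \sum_t \e[p]{z \Phi_{\vec{k}}(t)}.
\]

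It remains to estimate the factors $I(z, \vec{k})$. A straightforward pole analysis shows that $\Phi_{\vec{k}}$ vanishes identically as a rational function exactly when $\{0, k_3\} = \{k_2, k_4\}$ as multisets, i.e., in the two parametric families $k_2 = 0, k_3 = k_4$ and $k_4 = 0, k_2 = k_3$. This \emph{degenerate} set contains only $2p - 1$ tuples, for which the trivial bound $\abs{I} \leq p$ suffices to contribute $O(p \cdot p^2) = O(p^3)$ in total. For \emph{non-degenerate} $\vec{k}$, I would invoke the Weil bound for complete exponential sums along rational functions (in a suitable general form) to conclude $\abs{I(z, \vec{k})} \ll p^{1/2}$ whenever $p \nmid z$. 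Given $p \nmid y$ by hypothesis and assuming $p \nmid x$ for the moment, the $O(p^3)$ non-degenerate tuples therefore contribute $\ll p^3 \cdot p = p^4$.

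Finally, the residual case $p \mid x$ is dispatched by direct inspection: a short calculation using $\sum_{n \neq 0} \e[p]{yn} = -1$ shows $\abs{S(x,y,w;p)} \leq 2$ for every~$w$, so $\sum_w \abs{S}^4 \ll p$ trivially. The main technical obstacle is the appeal to the Weil bound for rational function sums, which requires excluding the possibility that $\Phi_{\vec{k}}$ is of the exceptional shape $g(t)^p - g(t) + c$ for some rational~$g$ and constant~$c$; for non-degenerate~$\vec{k}$ and $p$ large enough this is easily ruled out by a comparison of degrees, since $\Phi_{\vec{k}}$ has bounded degree while any non-constant $g^p - g$ has degree at least~$p$.
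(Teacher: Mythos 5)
Your proof is correct, but it takes a genuinely different route from the one in the paper. You complete the sum over $w$ to a full residue system (legitimate, since the terms are non-negative), open the fourth power, and decouple the variables through the substitution $k_i=m_i-m_1$, $u=m_1+w$, after which both phases are values of the single rational function $\Phi_{\vec k}$ at independent arguments; the estimate then rests on the Weil bound for complete exponential sums with rational-function argument, with the $2p-1$ degenerate tuples (those with $\braces{0,k_3}=\braces{k_2,k_4}$) and the case $p\mid x$ (where indeed $\abs{S(x,y,w;p)}\le 2$) treated separately, and the Artin--Schreier exceptional shape $g^p-g+c$ excluded by comparing degrees. These steps all check out (small primes are trivial, though you should cite a precise form of the rational-function Weil bound, e.g.\ Weil's original paper or Bombieri's treatment of exponential sums in finite fields). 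The paper argues instead in Kloosterman's elementary manner: using the invariance $S(x,y,w;p)=S(ax,ay,aw;p)$ of \cref{eq:TransformationProperty} it embeds $p-1$ copies of each $\abs{S(x,y,w;p)}^4$ into the average $\Sigma$ over all three parameters, evaluates the sums over $r$ and $t$ to reduce $\Sigma$ to $p^2\sum_{u<p}\#\mathscr{X}_u$, and bounds this solution count by hand in \cref{lem:Diophantine}; no algebraic-geometric input enters, which is exactly the point stressed in \cref{rem:Kloosterman}. What each approach buys: the paper's is self-contained and elementary but non-trivial only on average over $w$; yours is shorter but imports the Riemann Hypothesis for curves. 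In fact, once the rational-function Weil bound is allowed, the moment argument is dispensable: for $p\nmid w$ the sum $S(x,y,w;p)$ is itself a complete sum of the bounded-degree rational function $x\overline{m}+y\overline{m+w}$, whose pole at $-w$ has nonzero residue because $p\nmid y$, so $\abs{S(x,y,w;p)}\ll p^{1/2}$ individually; this yields \cref{eq:KloostermanTypePowerEstimate} immediately and even a sharper form of \cref{prop:KloostermanTypeEstimate}.
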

\begin{proof}[Proof of \cref{prop:KloostermanTypeEstimate}.]
	Note that, by Hölder's inequality, we have
	\[
		\sum_{w\in\mathscr{W}}\abs{S(x,y,w;p)} \leq (\#\mathscr{W})^{3/4}\cdot\biggl(\sum_{w\in\mathscr{W}}\abs{S(x,y,w;p)}^{4}\biggr)^{1/4}.
	\]
	The result now follows after applying \cref{prop:KloostermanTypePowerEstimate}.
\end{proof}

To prove \cref{prop:KloostermanTypePowerEstimate}, we adapt Kloosterman's original argument for bounding his sums (see~\citep[Section~2.43]{kloosterman1927ontherepresentation} and the comments made in \cref{rem:Kloosterman} below).
The argument is based on using a transformation property of the sums $S(x,y,w;p)$ (see~\cref{eq:TransformationProperty} below) in order to find a large contribution of $\abs{S(x,y,w;p)}^{4}$ in an average over the first three parameters (see~\cref{eq:ContributionToAverage}).
This average can be seen to count the number of solutions to certain congruence equations (see~\cref{eq:CountingSolutions}), and this number can be bounded non-trivially.

We start with an analysis of the congruence equations in question. To this end, write $\boldsymbol{m}$ for an integer vector $(m_1,m_2,m_3,m_4)$. For any integer $u$, we let
\begin{equation}\label{eq:Adef}
	A(\boldsymbol{m},u)
	= \sum_{1\leq k\leq 4} (-1)^{k} \overline{m_k+u}
\end{equation}
if $p\nmid(m_1+u)(m_2+u)(m_3+u)(m_4+u)$ and, for notational simplicity, we put $A(\boldsymbol{m},u)=1$ otherwise. The key result is the following:
\begin{lem}
	\label{lem:Diophantine}
	Let $p$ be a prime and $A(\boldsymbol{m},u)$ be given by~\cref{eq:Adef} and suppose that
	\begin{equation}\label{eq:Xu:def}
		\mathscr{X}_{u}
		= \braces{ \boldsymbol{m}\in\braces{1,\ldots,p}^{4} : A(\boldsymbol{m},0)\equiv A(\boldsymbol{m},u)\equiv0\bmod p }.
	\end{equation}
	Then
	\[
		\sum_{1\leq u\leq p}\#\mathscr{X}_{u}\ll p^{3}.
	\]
\end{lem}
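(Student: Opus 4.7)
My idea is to treat the case $u\equiv0\bmod p$ trivially and, for each residue class $u\not\equiv0\bmod p$, to make a change of variables that symmetrises the two congruences in the definition of $\mathscr{X}_u$. If $A(\boldsymbol{m},0)\equiv A(\boldsymbol{m},u)\equiv0\bmod p$ then in particular $p\nmid m_k(m_k+u)$ for every $k$, so for fixed $u$ with $p\nmid u$ the assignment
\[
	m\longmapsto t = 1+u\overline{m} = \frac{m+u}{m}\pmod p
\]
is a bijection from $\{m\in\mathbb{F}_p : m\neq 0,\ m+u\neq 0\}$ onto $\mathbb{F}_p\setminus\{0,1\}$. Writing $t_k$ for the image of $m_k$, a direct computation shows that $\overline{m_k}\equiv(t_k-1)/u$ and $\overline{m_k+u}\equiv(1-1/t_k)/u\bmod p$, so that, after clearing the factors $1/u$ and using $\sum_k(-1)^k=0$, the two conditions defining $\mathscr{X}_u$ become the two \emph{symmetric} congruences
\[
	t_1+t_3\equiv t_2+t_4\bmod p
	\qquad\text{and}\qquad
	\tfrac{1}{t_1}+\tfrac{1}{t_3}\equiv\tfrac{1}{t_2}+\tfrac{1}{t_4}\bmod p.
\]

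\textbf{Key count.} The crucial observation is that after this change of variables the number of admissible $(t_1,t_2,t_3,t_4)$ no longer depends on $u$; call it $N$. Hence $\#\mathscr{X}_u=N$ for every $u\not\equiv0\bmod p$. It therefore suffices to show $N\ll p^2$. For this I argue as follows: set $s=t_1+t_3=t_2+t_4$. If $s\not\equiv0\bmod p$, the second condition reads $s/(t_1t_3)\equiv s/(t_2t_4)$, forcing $t_1t_3\equiv t_2t_4\bmod p$; but then $\{t_1,t_3\}$ and $\{t_2,t_4\}$ are the (multi)sets of roots of the same monic quadratic over $\mathbb{F}_p$, so $\{t_1,t_3\}=\{t_2,t_4\}$. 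The choice of $(t_1,t_3)$ contributes $O(p^2)$ possibilities, and given it there are at most two completions. If on the other hand $s\equiv0\bmod p$, the conditions reduce to $t_3\equiv-t_1$ and $t_4\equiv-t_2$, which again gives $O(p^2)$ tuples. Consequently $N\ll p^2$.

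\textbf{Conclusion and main obstacle.} Adding the contribution of $u=p$ (where $\mathscr{X}_p$ is simply the zero set of the single linear-in-$\overline{m_k}$ congruence $A(\boldsymbol{m},0)\equiv0$, obviously of size $\ll p^3$) to the $p-1$ values of $u$ coprime to $p$ yields
\[
	\sum_{u\leq p}\#\mathscr{X}_u
	\ll p^3 + (p-1)\cdot p^2
	\ll p^3,
\]
which is the desired bound. The only non-routine step is spotting the symmetrising substitution $t_k=1+u\overline{m_k}$; once this is in hand, everything else is a matter of elementary algebra in $\mathbb{F}_p$. I expect no serious analytic obstacles, and no input beyond the pigeonhole principle (applied to roots of quadratics) is needed.
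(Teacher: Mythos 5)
Your proof is correct, but it follows a genuinely different route from the paper. The paper works directly with the variables $m_k$: it splits off the \emph{trivial} solutions (where $(m_1,m_3)$ is a permutation of $(m_2,m_4)$) and those with $\overline{m_1}+\overline{m_3}\equiv 0$, shows that any remaining $\boldsymbol{m}$ has $m_1$ determined modulo $p$ by $(m_2,m_3,m_4)$, and then — the key step — proves that such a non-trivial $\boldsymbol{m}$ can lie in at most two of the sets $\mathscr{X}_u$, because the relevant relation is a non-zero polynomial of degree at most $2$ in $u$; the bound $\sum_u\#\mathscr{X}_u\ll p^3$ thus comes from a double count over $u$. You instead fix $u\not\equiv 0\bmod p$ and symmetrise via the fractional linear substitution $t_k=(m_k+u)/m_k$, which turns the two defining congruences into the $u$-independent system $t_1+t_3\equiv t_2+t_4$, $\overline{t_1}+\overline{t_3}\equiv\overline{t_2}+\overline{t_4}$ over $\ZZ/p\ZZ\setminus\braces{0,1}$; the Vieta argument then gives the pointwise bound $\#\mathscr{X}_u\ll p^2$ for every $u\not\equiv 0$, and summing over $u$ (plus the easy $u\equiv 0$ case, where indeed $\#\mathscr{X}_p\ll p^3$) finishes the proof. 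Your version is arguably cleaner and strictly stronger in that it bounds each $\#\mathscr{X}_u$ individually (and could even be pushed to an exact count), whereas the paper's bound is only on average over $u$; on the other hand, the paper's double-counting argument does not rely on the special Möbius-type symmetry of shifted inverses and so transfers more readily to variants where no such substitution is available. All the individual steps you use (the bijectivity of $m\mapsto 1+u\overline{m}$, the cancellation of constants via $\sum_k(-1)^k=0$, the multiset identity from equal elementary symmetric functions, and the $s\equiv 0$ case) check out.
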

\begin{proof}
	Consider $\boldsymbol{m} = (m_1,m_2,m_3,m_4)\in\mathscr{X}_{u}$. Certainly we have
	\begin{equation}\label{eq:spDivisibility}
		p\nmid\prod_{1\leq k\leq4}m_k(m_k+u).
	\end{equation}
	Additionally, from $p\mid A(\boldsymbol{m},0)$ and $p\mid A(\boldsymbol{m},u)$,
	we deduce that
	\begin{equation}\label{eq:sEqs}
		\left\lbrace
		\begin{alignedat}{2}
			\overline{m_1}+\overline{m_3} &
			\equiv \overline{m_2}+\overline{m_4} &
			\mod p, \\
			\overline{m_1+u}+\overline{m_3+u} &
			\equiv \overline{m_2+u}+\overline{m_4+u} &
			\mod p.
		\end{alignedat}
		\right.
	\end{equation}
	This is satisfied trivially if $(m_1,m_3)$ is a permutation of $(m_2,m_4)$. There are at most $2(p-1)^{2}$ such trivial solutions. Assume next that $\boldsymbol{m}$ is a non-trivial solution to the system~\cref{eq:sEqs} and, additionally, that neither expression in~\cref{eq:sEqs} is $\equiv0$; this additional assumption excludes at most $2(p-1)^{2}$ values of $\boldsymbol{m}$.
	From~\cref{eq:sEqs} it follows that
	\begin{align*}
		\MoveEqLeft
		(\overline{m_1}+\overline{m_3}) m_1m_3 (m_2+m_4) \\ &
		\equiv (m_2+m_4)(m_1+m_3) \\ &
		\equiv (m_1+m_3) m_2m_4 (\overline{m_2}+\overline{m_4})
		\mod p.
	\end{align*}
	Upon canceling $\overline{m_1} + \overline{m_3} \equiv \overline{m_2} + \overline{m_4} \not \equiv 0 \bmod p$, we find that
	\[
		m_1m_3 (m_2+m_4)
		\equiv (m_1+m_3) m_2m_4 \mod p,
	\]
	which in turn may be rearranged to
	\begin{gather}\label{eq:CongEq:1}
		m_1(m_2m_3+m_3m_4-m_2m_4)
		\equiv m_2m_3m_4 \mod p.
	\end{gather}
	Note that the term $m_2m_3+m_3m_4-m_2m_4\bmod p$ cannot vanish, for otherwise~\cref{eq:CongEq:1} implies $p\mid m_2m_3m_4$, in contradiction to~\cref{eq:spDivisibility}. Hence, for any $\boldsymbol{m}$ satisfying the above assumptions, we may compute $m_1\bmod p$ from $(m_2,m_3,m_4)$. Indeed, by~\cref{eq:CongEq:1},
	\[
		m_1 \equiv m_2m_3m_4 \overline{ (m_2m_3 + m_3m_4 - m_2m_4) } \mod p.
	\]
	Next, we claim that $\boldsymbol{m}$ belongs to at most two of the sets $\mathscr{X}_{u}$ ($u=1,\ldots,p$). To see this, first observe that, along similar lines as the deduction of~\cref{eq:CongEq:1} from~\cref{eq:sEqs}, we have
	\[
		\begin{aligned}
			\MoveEqLeft
			(m_1+u)(m_3+u)(m_2+m_4+2u) \\ &
			\equiv(m_1+m_3+2u)(m_2+u)(m_4+u) \mod p.
		\end{aligned}
	\]
	Recalling that $\boldsymbol{m}$ was assumed to be non-trivial, the difference of the left and right hand side of the above congruence is seen to be a \emph{non-zero} polynomial of degree at most two in $u$. Since $\ZZ/p\ZZ$ is an integral domain, the claim follows.
	From this, and taking the trivial solutions into account, we conclude that
	\[
		\sum_{u<p} \#\mathscr{X}_{u}
		\leq p (2p^{2} + 4(p-1)^2)
		\ll p^3.
		\qedhere
	\]
\end{proof}
\begin{proof}[Proof of \cref{prop:KloostermanTypePowerEstimate}.]
	First recall~\cref{eq:Sum:Sxywp} and observe that 
	\begin{equation}\label{eq:TransformationProperty}
		S(x,y,w;p) = S(ax,ay,aw;p)
	\end{equation}
	for any $a$ indivisible by $p$. Thus, the average
	\[
		\varSigma = \mathop{\sum\sum\sum}_{1\leq r,t\leq p;\, 1\leq u<p} \abs{S(r,t,u;p)}^4
	\]
	contains $p-1$ copies of $\abs{S(x,y,w;p)}^{4}$. Hence,
	\begin{equation}\label{eq:ContributionToAverage}
		(p-1) \sum_{w\in\mathscr{W}} \abs{S(x,y,w;p)}^4
		\leq \varSigma.
	\end{equation}
	To estimate $\varSigma$, write $\boldsymbol{m}=(m_1,m_2,m_3,m_4)$ and start by squaring out each term $\abs{S(r,t,u;p)}^{4}$ twice to obtain
	\[
		\varSigma
		= \mathop{\sum\sum\sum\sum\sum\sum\sum}_{\substack{
			1\leq r,t\leq p;\,
			1 \leq u, m_1, m_2, m_3, m_4 < p \\
			\mathclap{ p \nmid (m_1+u)(m_2+u)(m_3+u)(m_4+u) }
		}}
		\e[p]{ rA(\boldsymbol{m},0) + tA(\boldsymbol{m},u) },
	\]
	where $A(\boldsymbol{m},u)$ is given by~\cref{eq:Adef}.\\
	After moving the summation over $r$ and $t$ to the right and observing that
	\[
		\sum_{1\leq r\leq p} \e[p]{rx}
		= \begin{cases}
			p & \text{if } p\mid x, \\
			0 & \text{if } p\nmid x,
		\end{cases}
	\]
	we find that
	\begin{equation}\label{eq:CountingSolutions}
		\varSigma = p^2 \sum_{1\leq u<p} \#\mathscr{X}_{u},
	\end{equation}
	where $\mathscr{X}_{u}$ is given by~\cref{eq:Xu:def}. By \cref{lem:Diophantine}, $\varSigma\ll p^{5}$, so that from~\cref{eq:ContributionToAverage} we infer~\cref{eq:KloostermanTypePowerEstimate}.
\end{proof}
\begin{rem}\label{rem:Kloosterman}
	It should be stressed that in the proof of \cref{prop:KloostermanTypePowerEstimate} we do not obtain a non-trivial bound for an individual sum $S(x,y,w;p)$; it is only the averaging over $w\in\mathscr{W}$ which makes this result non-trivial. This may be compared with \citet{kloosterman1927ontherepresentation}, who obtains
	\begin{equation}\label{eq:KloostermansOriginalBound}
		\abs{ \Kl{p}{x}{y} } \ll p^{3/4} (x,y,p)^{1/4},
	\end{equation}
	(recall~\cref{eq:KloostermanSum}) by exploiting the transformation property
	\[
		\Kl{p}{x}{y} = \Kl{p}{ax}{\overline{a}y}
		\quad (\text{assuming } p\nmid a)
	\]
	in place of~\cref{eq:TransformationProperty}.
	Notice that only two parameters are transformed, so that when considering the obvious analogue $\varSigma_0$ of $\varSigma$, one only has to average over two parameters:
	\[
		\varSigma_0 = \mathop{\sum\sum}_{1\leq r,t\leq p} \abs{\Kl{p}{r}{t})}^4.
	\]
	Similarly as above one finds that
	\begin{equation}\label{eq:KloostermanOriginal}
		(p-1) \abs{ \Kl{p}{x}{y} }^4
		\leq \varSigma_0
		= p^2\, \#\braces{\text{solutions $\boldsymbol{m}\in(\ZZ/p\ZZ)^4$ to two congruences}}
	\end{equation}
	(see~\cite[pp.~423--236]{kloosterman1927ontherepresentation}, or \cite{heath-brown2000arithmeticappl} for a shorter exposition).
	Heuristically, one might expect that each congruence condition restricts one of the four variables, so the number of solutions counted on the right hand of~\cref{eq:KloostermanOriginal} ought to be $\ll p^2$. Indeed, this can be justified using arguments similar to those used in the proof of \cref{lem:Diophantine} (again, see \cite{kloosterman1927ontherepresentation,heath-brown2000arithmeticappl}) and from \cref{eq:KloostermanOriginal} one then concludes~\cref{eq:KloostermansOriginalBound}.
	We note that in contrast to the above heuristic, we are unable to show $\#\mathscr{X}_u \ll p^2$ for $\mathscr{X}_u$ given in~\cref{eq:Xu:def}, and numerical experiments seem to suggest that this is actually false; \cref{lem:Diophantine} only verifies our heuristic on average.
\end{rem}

\subsection{Proof of \texorpdfstring{\cref{thm:KloostermanBeattyBound}}{Theorem\autoref{thm:KloostermanBeattyBound}}}

We shall need the following result which is an easy consequence of the pigeonhole principle:
\begin{lem}[{\cite[Lemma 3.3]{banks2006nonresidues}}]
	\label{lem:BanksShparlinski:BigSet}Let $\alpha$ be a fixed irrational number. Then, for every positive integer $K$ and real number $\Delta\in\lparen0,1\rbrack$, there exists a real number $\gamma$ such that
	\[
		\#\braces{ k\leq K : \braces{k\alpha +\gamma} < \Delta } \geq 0.5\, K \Delta.
	\]
\end{lem}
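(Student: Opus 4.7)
The plan is a short pigeonhole argument on the fractional parts $\braces{k\alpha}$ for $k = 1, \ldots, K$. First I would set $N = \ceil{1/\Delta}$ and partition $\ropeninterval{0,1}$ into the $N$ half-open subintervals $I_{j} = \ropeninterval{j/N, (j+1)/N}$ for $j = 0, 1, \ldots, N-1$, each of length $1/N \leq \Delta$. Since $\Delta \in \lopeninterval{0,1}$, one has $N \leq 1/\Delta + 1 \leq 2/\Delta$, whence $K/N \geq K\Delta/2$.

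Next, the pigeonhole principle applied to the $K$ points $\braces{k\alpha}$ ($k = 1, \ldots, K$) produces an index $j^{*}$ such that $I_{j^{*}}$ contains at least $K/N \geq K\Delta/2$ of these points. I would then choose $\gamma$ so that $\gamma \equiv -j^{*}/N \pmod{1}$. For every $k \leq K$ with $\braces{k\alpha}\in I_{j^{*}}$, this yields
\[
    \braces{k\alpha + \gamma}
    = \braces{k\alpha} - j^{*}/N
    \in \ropeninterval{0, 1/N}
    \subseteq \ropeninterval{0, \Delta},
\]
so in particular $\braces{k\alpha + \gamma} < \Delta$. Consequently $\#\braces{k \leq K : \braces{k\alpha + \gamma} < \Delta} \geq K\Delta/2$, which is the desired conclusion.

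The argument is purely combinatorial: the irrationality of $\alpha$ is in fact never used (the $\braces{k\alpha}$ need not even be distinct for the bound to hold), and there is no genuine obstacle. The only cosmetic care needed concerns the rounding in $N = \ceil{1/\Delta}$ versus $1/\Delta$, which is precisely what is absorbed into the constant $\tfrac{1}{2}$ in the statement.
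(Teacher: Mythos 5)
Your argument is correct: the partition of $\ropeninterval{0,1}$ into $\ceil{1/\Delta}$ boxes, the pigeonhole step, and the shift $\gamma\equiv -j^{*}/N\bmod 1$ all check out (including the bound $N\leq 2/\Delta$, which uses $\Delta\leq 1$), and you rightly note that irrationality of $\alpha$ is not needed. This is essentially the same pigeonhole argument the paper has in mind, since it only cites \cite[Lemma~3.3]{banks2006nonresidues} and remarks that the result ``follows easily from the pigeonhole principle.''
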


\begin{proof}[{\normalfont To give a} proof of \cref{thm:KloostermanBeattyBound},]
	we adapt the reasoning from~\cite{banks2006nonresidues}.
	Let $K\leq N$ be a positive integer and $\Delta\in\lparen0,1\rbrack$ to be determined later (see~\cref{eq:Delta:and:K:choice} below). Then, by \cref{lem:BanksShparlinski:BigSet}, there is some real number $\gamma$ such that the set
	\[
		\mathscr{K} = \braces{ k\leq K : \braces{k\alpha+\gamma} < \Delta }
	\]
	has cardinality
	\begin{equation}\label{eq:K:size}
		\#\mathscr{K} \geq 0.5\, K \Delta.
	\end{equation}
	Furthermore, let
	\[
		\mathscr{N} = \braces{ 1\leq n\leq N : \braces{n\alpha+\beta-\gamma} < 1-\Delta }
	\]
	and $\mathscr{N}^{\mathrm{c}} = \braces{1,\ldots,N} \setminus \mathscr{N}$. Clearly, recalling~\cref{eq:Discrepancy},
	\begin{equation}
		\label{eq:Ncompl:size}
		\#\mathscr{N}^{\mathrm{c}} = N\Delta + O(ND_{\alpha,\beta}(N)).
	\end{equation}
	Now, writing
	\[
		x_{n,k} = \e*[p]{x\floor{(n+k)\alpha+\beta} + y\overline{\floor{(n+k)\alpha+\beta}}}
	\]
	for the moment, for every $k\in\mathscr{K}$, we find that
	\begin{align*}
		\K{p}{\alpha}{\beta}{N}{x}{y} &
		=\sum_{\substack{1\leq n\leq N\\
				p\nmid\floor{(n+k)\alpha+\beta}
		}} x_{n,k} + O(K) \\ &
		=\sum_{\substack{n\in\mathscr{N}\\
				p\nmid\floor{(n+k)\alpha+\beta}
		}} x_{n,k} + O\parentheses{ K + \#\mathscr{N}^{\mathrm{c}} }.
	\end{align*}
	Consequently,
	\begin{equation}
		\label{eq:K:averaging}
		\K{p}{\alpha}{\beta}{N}{x}{y}
		= \frac{W}{\#\mathscr{K}} + O(K+\#\mathscr{N}^{\mathrm{c}}),
	\end{equation}
	where
	\[
		W = \mathop{ \sum_{n\in\mathscr{N}} \sum_{k\in\mathscr{K}} }_{\mathclap{ p\nmid\floor{(n+k)\alpha+\beta} }} \e*[p]{ x\floor{(n+k)\alpha+\beta} + y\overline{\floor{(n+k)\alpha+\beta}} }.
	\]
	For any $(n,k)\in\mathscr{N}\times\mathscr{K}$, a simple calculation shows that
	\[
		\floor{ (n+k)\alpha+\beta }
		= \floor{ n\alpha+\beta-\gamma} + \floor{ k\alpha+\gamma }.
	\]
	Next, we apply Cauchy's inequality to $W$, getting
	\begin{align*}
		\abs{W}^{2} &
		\leq \#\mathscr{N} \cdot \sum_{n\in\mathscr{N}} \abs[\bigg]{ \! \sum_{\substack{
			k\in\mathscr{K} \\
			p\nmid\floor{(n+k)\alpha+\beta}
		}} \! \e*[p]{ x\floor{(n+k)\alpha+\beta} + y\overline{\floor{(n+k)\alpha+\beta}} } }^2 \\ &
		\ll_\alpha N \cdot \sum_{1\leq s\leq p} \, \abs[\bigg]{ \! \sum_{\substack{
			k\in\mathscr{K} \\
			p\nmid(s+\floor{ k\alpha+\gamma})
		}} \! \e*[p]{ x(s+\floor{ k\alpha+\gamma}) + y\overline{(s+\floor{ k\alpha+\gamma})} }}^2,
	\end{align*}
	where replacing the summation over $n\in\mathscr{N}$ by the summation over $s\leq p$ is allowed, since our assumption $N\leq p$ ensures that
	\[
		\#\braces{ n\in\mathscr{N} : \floor{ n\alpha+\beta-\gamma}\equiv s\bmod p }
		< 1 + \alpha
		\ll_\alpha 1.
	\]
	On squaring out the inner sum, we find that
	\[
		\abs{W}^{2}
		\ll_\alpha
		N \cdot \mathop{\sum\sum\sum}_{\substack{
				1\leq s\leq p, \: k,\ell\in\mathscr{K} \\
				\mathclap{ p\nmid(s+\floor{ k\alpha+\gamma}) } \\
				\mathclap{ p\nmid(s+\floor{\ell\alpha+\gamma}) }
		}}
		\begin{multlined}[t]
			\e[p]{x(\floor{ k\alpha+\gamma} + \floor{\ell\alpha+\gamma})} \times{} \\
			\shoveleft[.5cm]{ \times \e*[p]{y(\overline{s+\floor{ k\alpha+\gamma}} - \overline{s+\floor{\ell\alpha+\gamma}})}. \hfill }
		\end{multlined}
	\]
	This is
	\[
		\ll_\alpha N \cdot \mathop{\sum\sum}_{k,\ell\in\mathscr{K}} \, \abs[\bigg]{ \sum_{\substack{
			1\leq s\leq p \\
			p\nmid(s+\floor{ k\alpha+\gamma}) \\
			p\nmid(s+\floor{\ell\alpha+\gamma})
		}} \e*[p]{y(\overline{s + \floor{ k\alpha+\gamma}} - \overline{s + \floor{\ell\alpha+\gamma}})} }.
	\]
	Upon writing
	\[
		\mathscr{W}_k
		= \braces{ \floor{\ell\alpha+\gamma} - \floor{k\alpha+\gamma} : \ell \in \mathscr{K} },
	\]
	we infer
	\[
		\abs{W}^{2}
		\ll_\alpha N \adjustlimits\sum_{k\in\mathscr{K}}\sum_{w\in\mathscr{W}_k} \abs[\bigg]{ \sum_{\substack{ s<p \\ p\nmid(s+w) }} \e[p]{y(\overline{s}-\overline{s+w})} }.
	\]
	Hence, by \cref{prop:KloostermanTypeEstimate} and using $\#\mathscr{K} = \#\mathscr{W}_k$ ($k\in\mathscr{K}$),
	\begin{equation}\label{eq:WsquaredBound}
		\abs{W}^{2}
		\ll_\alpha N (\#\mathscr{K})^{7/4} p.
	\end{equation}
	In view of~\cref{eq:K:averaging}, we find that
	\[
		\abs{\K{p}{\alpha}{\beta}{N}{x}{y}}
		\ll_\alpha \sqrt{\frac{Np}{(\#\mathscr{K})^{1/4}}} + K + \#\mathscr{N}^{\mathrm{c}}.
	\]
	Consequently, upon gathering~\cref{eq:K:size},~\cref{eq:Ncompl:size} and \cref{lem:DiscrepBetaIndep}, we obtain the bound
	\[
		\abs{\K{p}{\alpha}{\beta}{N}{x}{y}}
		\ll_\alpha (Np)^{1/2} (K\Delta)^{-1/8} + K + N\Delta + ND_{\alpha}(N).
	\]
	We now choose $\Delta$ and $K$. Specifically, we aim at obtaining a bound of the shape $\abs{\K{p}{\alpha}{\beta}{N}{x}{y}} = o_\alpha(N)$ in a range $N\leq p<N^\theta$ with $\theta>1$ as large as possible.
	It can be seen that
	\begin{equation}
		\label{eq:Delta:and:K:choice}
		\Delta = N^{-105/128} p^{21/32}
		\quad\text{and}\quad
		K = \ceil{ N\Delta }
	\end{equation}
	yield the assertion of the theorem provided that $\Delta<1$, i.e., when
	\begin{equation}\label{eq:Hypothesis}
		(\log p)/\log N < \tfrac{5}{4}.
	\end{equation}
	On the other hand, if \cref{eq:Hypothesis} fails to hold, then the theorem asserts nothing more than the trivial bound
	\(
		\abs{\K{p}{\alpha}{\beta}{N}{x}{y}} \ll N
	\),
	so the proof is complete.
\end{proof}

\section{Improvements and open problems}
\label{sec:Improvements}

\subsection{Improvements}
As noted earlier in~\cref{rem:Improvements}, I.~E.~Shparlinski pointed out to the author that a generalised form of Weil's bound also applies to the sum $S(x,y,w;p)$ from~\cref{eq:Sum:Sxywp}:
indeed, by applying~\cite[Theorem~2]{moreno1991exponential-sums} to the rational function
\[
	R(m)
	= \frac{x}{m} + \frac{y}{m+w}
	= \frac{(x+y)m+xw}{m(m+w)}
	\in (\ZZ/p\ZZ)[m],
\]
it is straight-forward to see that
\begin{equation}\label{eq:ImprovedKloostermanTypeEstimate}
	S(x,y,w;p) \leq 2\sqrt{p}+1
\end{equation}
whenever $p\nmid x+y$ or $p\nmid xw$, for then $R(m)$ is (after cancelling any potential common factors in the above) a non-zero quotient with denominator of degree at most two and numerator of smaller degree.
The bound~\cref{eq:ImprovedKloostermanTypeEstimate} should be considered as an improved version of \cref{prop:KloostermanTypeEstimate}.

Consequently, we have a chain of improvements: in the proof of \cref{thm:KloostermanBeattyBound}, where we have used \cref{prop:KloostermanTypeEstimate}, we instead get
\begin{equation}
	\tag{$\text{\ref{eq:WsquaredBound}}'$}
	\abs{W}^{2} \ll_\alpha N (\#\mathscr{K}) \parentheses{p + (\#\mathscr{K}) \sqrt{p}}
\end{equation}
in place of~\cref{eq:WsquaredBound}.
Then, the choices in~\cref{eq:Delta:and:K:choice} can then be adapted to
\begin{equation}
	\tag{$\text{\ref{eq:Delta:and:K:choice}}'$}
	\Delta = N^{-1/4} p^{1/8}
	\quad\text{and}\quad
	K = \ceil{ N^{1/4} p^{3/8} },
\end{equation}
provided that we assume that $N\leq p < N^2$.
(Recall that the method requires $N\leq p$ and $\Delta<1$.)
This yields
\begin{equation}
	\tag{$\text{\ref{eq:KloostermanBeattyBound}}'$}
	\abs{\K{p}{\alpha}{\beta}{N}{x}{y}}
	\ll_\alpha N^{3/4} p^{1/8} + ND_{\alpha}(N)
\end{equation}
in place of~\cref{eq:KloostermanBeattyBound}.

Similar to our deduction of \cref{cor:KloostermanBeattyBound:FiniteType}, we may note that \cref{lem:FiniteTypeDiscrepancyBound} then implies the bound
\begin{equation}
	\tag{$\text{\ref{eq:KloostermanBeattyBound:FiniteType}}'$}
	\abs{\K{p}{\alpha}{\beta}{N}{x}{y}}
	\ll_\alpha N^{3/4} p^{1/8}
\end{equation}
in place of~\cref{eq:KloostermanBeattyBound:FiniteType}, provided that $\alpha$ is of finite type~$<8$.

Consequently, we have the following improved version of \cref{eq:HyperbolaSolutionDetector:Simplified}:
\begin{equation}
	\tag{$\text{\ref{eq:HyperbolaSolutionDetector:Simplified}}'$}
	(\log p) N^{-2+3/4} p^{1+1/8} \ll_{\alpha,\epsilon} 1.
\end{equation}
Together with the choice $N = \floor{ p^{9/10}\log p }$ this yields the following improved bound in \cref{thm:max:mm:bound}:
\begin{equation}
	\tag{$\text{\ref{eq:smallestSol}}'$}
	\max\braces{m,\tilde{m}} \leq \alpha p^{9/10}\log p + \beta,
\end{equation}
and the assumption on the type of $\alpha$ can be relaxed to allow for all finite types strictly smaller than~$8$.

\subsection{Scope and open problems}
Apart from the obvious advantage of yielding superior results, the above approach via Weil's bound has the disadvantage of failing to generalise to composite moduli in an obvious manner.
On the other hand, when studying \cref{question:LeastModHyperbolaPoint:Beatty} with $p$ replaced by some composite number, we expect that our approach via~\cref{prop:KloostermanTypeEstimate} should still be capable of yielding non-trivial results.
A suitable generalisation of \cref{lem:Diophantine} with composite~$p$ should be the main ingredient here.

Another direction in which our results may be generalised goes as follows: note that, as remarked earlier in \cref{sec:KloostermanBeattyBounds}, by \cref{lem:Weyl}, \cref{thm:KloostermanBeattyBound} is non-trivial for every irrational $\alpha>1$ provided only that $N$ and $p$ are large and in a suitable range.
In spite of this, we obtain a non-trivial answer to \cref{question:LeastModHyperbolaPoint:Beatty} (in the form of \cref{thm:max:mm:bound}) only for $\alpha$ with some restriction on its type.
The reason for this defect can be traced back to the $\log p$ factor appearing in~\cref{eq:HyperbolaSolutionDetector}, which in turn arose from the application of \cref{lem:CompletingTechnique}.
It seems plausible that one should be able to remove this by considering a suitable analogue of \cref{lem:CompletingTechnique} where the sharp cut-off at $a$ and $b$ in the sum
\[
	\sum_{a<m\leq b} A_m
\]
is smoothed out. Specifically, working instead with
\[
	\sum_{1\leq m\leq q} A_m \chi(m),
\]
where $\chi = \chi_0 * \chi_0$ is given as the convolution of the characteristic function $\chi_0$ of an interval with itself, seems promising.

\section*{Acknowledgements}
Most of this work is part of the author's doctoral dissertation at Würzburg University.
The author gratefully acknowledges the encouragement of his advisor, Jörn~Steuding.
The work on this article was completed while the author was employed at Graz University of Technology.
The financial support by both Würzburg University and Graz University of Technology is highly appreciated.
The author would like to thank Christoph~Aistleitner for comments on an earlier draft.
Furthermore, the valuable criticism of the anonymous referee has lead to various improvements on the exposition and is also greatly appreciated.


\end{document}